\newtheorem{theorem}{Theorem}[section]
\newtheorem{lemma}[theorem]{Lemma}
\newtheorem{proposition}[theorem]{Proposition}
\newtheorem{corollary}[theorem]{Corollary}
\theoremstyle{definition}
\newtheorem{example}[theorem]{Example}
\theoremstyle{remark}
\newtheorem{remark}[theorem]{Remark}
\numberwithin{equation}{section}
\begin{document}

\title[Matrix Inequalities between $f(A)\sigma f(B)$ and $A\sigma B$]{Matrix Inequalities between $f(A)\sigma f(B)$ and $A\sigma B$}

\author[M. Devi]{Manisha Devi}
\address{M. Devi: Department of Mathematics and Computing, Dr. B. R. Ambedkar National Institute of Technology, Jalandhar 144008, Punjab, India.}
\email{devimanisha076@gmail.com}

\author[J. S. Aujla]{Jaspal Singh Aujla}
\address{J. S. Aujla: Department of Mathematics and Computing, Dr. B. R. Ambedkar National Institute of Technology, Jalandhar 144008, Punjab, India.}
\email{aujlajs@nitj.ac.in}

\author[M. Kian]{Mohsen Kian}
\address{M. Kian: Department of Mathematics, University of Bojnord, P. O. Box
1339, Bojnord 94531, Iran}
 \email{kian@ub.ac.ir }

\author[M. S. Moslehian]{Mohammad Sal Moslehian}
\address{M. S. Moslehian: Department of Pure Mathematics, Center of Excellence in Analysis on Algebraic Structures (CEAAS), Ferdowsi University of Mashhad, P. O. Box 1159, Mashhad 91775, Iran.}
\email{moslehian@um.ac.ir; moslehian@yahoo.com}

\keywords{Concave function, convex function, matrix mean, determinant inequality, unitarily invariant norm}

\renewcommand{\subjclassname}{\textup{2020} Mathematics Subject Classification}
\subjclass[]{15A60; 15A42}

\maketitle

\begin{abstract}
Let $A$ and $ B$ be $n\times n$ positive definite complex matrices, let $\sigma$ be a matrix mean, and let $f : [0,\infty)\to [0,\infty)$ be a differentiable convex function with $f(0)=0$. We prove that
$$f^{\prime}(0)(A \sigma B)\leq \frac{f(m)}{m}(A\sigma B)\leq f(A)\sigma f(B)\leq \frac{f(M)}{M}(A\sigma B)\leq f^{\prime}(M)(A\sigma B),$$
where $m$ represents the smallest eigenvalues of $A$ and $B$ and $M$ represents the largest eigenvalues of $A$ and $B$. If $f$ is differentiable and concave, then the reverse inequalities hold. We use our result to improve some known subadditivity inequalities involving unitarily invariant norms under certain mild conditions. In particular, if $f(x)/x$ is increasing, then
$$|||f(A)+f(B)|||\leq\frac{f(M)}{M} |||A+B|||\leq |||f(A+B)|||$$
holds for all $A$ and $B$ with $M\leq A+B$.
Furthermore, we apply our results to explore some related inequalities. As an application, we present a generalization of Minkowski's determinant inequality.
\end{abstract}

\section{Introduction}

Throughout the paper, a capital letter represents a matrix in the algebra $\mathbb{M}_{n}$ of all $n\times n$ matrices over the field of complex numbers unless otherwise stated. We denote the identity matrix by $I$. The set of all Hermitian matrices in $\mathbb{M}_{n}$  is denoted by $\mathbb{H}_{n}$. The sets of all positive semidefinite and positive definite matrices in $\mathbb{H}_{n}$ are denoted by $\mathbb{P}_{n}$ and $\mathbb{P}_{n}^{+}$, respectively. A norm $|||\cdot |||$ on $\mathbb{M}_{n}$ is called \emph{unitarily invariant} if $|||UAV|||=|||A|||$ for all $A \in$ $\mathbb{M}_{n}$
and for all unitaries $U, V \in$ $\mathbb{M}_{n}.$ The most basic unitarily invariant norms are the operator norm $\|A\|=\sigma_{1}(A)$, the Schatten $p$-norms $\|\cdot\|_p$ for $p\geq 1$, and the Ky Fan norms $\|\cdot\|_{(k)}$, for $k=1,2,\ldots,n$, defined by
$$\|A\|_{(k)}=\sum_{j=1}^{k}\sigma_{j}(A) ~\hspace{.5cm}(k=1,2,\ldots,n),$$
where $\sigma_{1}(A)\geq \sigma_{2}(A)\geq\cdots\geq\sigma_{n}(A)$ are the singular values of $A$. Let $\lambda_j(A)$ stand for the $j$th eigenvalue of $A$ in decreasing order.

In addition, we assume that all real functions are continuous. Let $f$ be a real-valued function defined on an interval $J$ and let $A\in \mathbb{H}_{n}$ have its spectrum in $J$. Then $f(A)$ is defined by the familiar functional calculus. The function $f$ is called \emph{matrix monotone of order $n$} if $A\geq B$ implies $f(A)\geq f(B)$ for $A, B\in \mathbb{H}_{n}$ with spectra in $J$. If $f$ is matrix monotone of order $n$ for all $n\in \mathbb{N}$, then we say that $f$ is \emph{matrix monotone}. The function $f$ is said to be \emph{matrix convex} if
$$f(tA+(1-t)B)\leq tf(A)+(1-t)f(B)$$
for all $A,B\in \mathbb{H}_{n}$ with spectra in $J$ and for all $0\leq t\leq1$. The function $f$ is called \emph{matrix concave} if $-f$ is matrix convex. In the class of nonnegative functions on $[0,\infty)$, matrix monotonicity and matrix concavity are equivalent; see \cite[Theorem V.2.5]{7.}.

 A binary operation $\sigma: \mathbb{P}_{n} \times \mathbb{P}_{n}\to\mathbb{P}_{n}$ is called a \emph{matrix mean} (see \cite{FMPS}) provided that

 (i) $A\leq C$ and $B\leq D$ imply $A\sigma B\leq C\sigma D$;

 (ii) $C^*(A\sigma B)C\leq(C^*AC)\sigma(C^*BC)$;

 (iii) $A_n\downarrow A$ and $B_n\downarrow B$ imply $(A_n\sigma B_n)\downarrow A\sigma B$, where $A_n\downarrow A$ means that $A_1\geq A_2\geq \cdots$ and $A_n\to A$;

 (iv) $I\sigma I=I$.

There is an affine order isomorphism between the class of matrix means, and the class of positive matrix monotone functions $h$ defined on $[0,\infty)$ with $h(1)=1$ via $h(t)I=I\sigma (tI)\,\,(t\geq 0)$. In addition, $A \sigma B = A^{\frac{1}{2}}h(A^{\frac{-1}{2}}BA^{\frac{-1}{2}})A^{\frac{1}{2}}$.
For example, the matrix means corresponding to the positive matrix monotone functions $(1-t)x+t$, $((1-t)x^{-1}+t)^{-1}$, and $x^{t}$ are, respectively, called the \emph{weighted matrix arithmetic} mean $A\nabla_t B=(1-t)A+tB$, the \emph{weighted matrix harmonic mean} $A!_tB=((1-t)A^{-1}+tB^{-1})^{-1}$, and the \emph{weighted matrix geometric mean} $A\#_tB=A^{\frac{1}{2}}\left(A^{\frac{-1}{2}}BA^{\frac{-1}{2}}\right)^{t}A^{\frac{1}{2}}$. In cases where $A$ and $B$ are not invertible, these means are defined by a standard continuity argument.

Several comparisons between $|||f(A)+f(B)|||$ and $|||f(A+B)|||$ have been explored, particularly the Rotfel'd inequality $\|f(|A+B|)\|\leq \|f(|A|)\|+\|f(|B|)\|$. This topic has been studied by Kosem \cite{10.} as well as the second author and Silva \cite{5.}, Bourin and Uchiyama \cite{BOU}, Audenaert and the second author \cite{AUD}, Zhang \cite{ZHANG}, the third author, the fourth author and Mi\'{c}i\'{c} \cite{MMK}, as well as Alrimavi, Hirzallah, and Kittaneh \cite{KIT}. In addition, matrix inequalities that seek a relationship between $f(A)+f(B)$ and $f(A+B)$ have been examined by Singh and Vasudeva \cite{SINGH} as well as the second author and Bourin \cite{1.}.

In the second section, for $A,B\in \mathbb{P}_{n}^+$ and a matrix mean $\sigma$, we prove that
$$f^{\prime}(0)(A \sigma B)\leq \frac{f(m)}{m}(A\sigma B)\leq f(A)\sigma f(B)\leq \frac{f(M)}{M}(A\sigma B)\leq f^{\prime}(M)(A\sigma B),$$
and
$$f^{\prime}(0)(A \sigma B)\leq \frac{f(m)}{m}(A\sigma B)\leq f(A\sigma B)\leq \frac{f(M)}{M}(A\sigma B)\leq f^{\prime}(M)(A\sigma B),$$
when $f:[0,\infty)\rightarrow[0,\infty)$ is a differentiable convex function with $f(0)=0$, and $m$ and $M$ are the smallest and largest eigenvalues of $A$ and $B$, respectively. The above inequalities are reversed if $f$ is concave.

Our results imply some improvements to well-known norm subadditivity inequalities. Moreover, we prove that if $f(x)/x$ is increasing, then the inequality
$$|||f(A)+f(B)|||\leq\frac{f(M)}{M} |||A+B|||\leq |||f(A+B)|||$$
holds for all unitarily invariant norms on $\mathbb{M}_n$, when $M\leq A+B$.

We show related inequalities and present several examples to illustrate our discussion. In the third section, we apply our results to derive some determinantal inequalities. Most of our findings are applicable when the matrices $A$ and $B$ are replaced with appropriate operators on a separable Hilbert space. Throughout the paper, we assume that $m$ and $M$ are real numbers with $0 < m \leq M$ unless otherwise stated.

\section{Main results}

We start our work with the following auxiliary lemma.

\begin{lemma}\cite[Theorem 1.4.2]{NIC}\label{lemma21}
	Let $f$ be a convex function on an interval $J$. Then $f$ has finite left and right derivatives at each interior point of $J$, and $x<y$ in the interior of $J$ implies
	$$f_{-}^{\prime}(x)\leq f_{+}^{\prime}(x)\leq \frac{f(y)-f(x)}{y-x}\leq f_{-}^{\prime}(y)\leq f_{+}^{\prime}(y).$$
	The above inequalities are reversed if $f$ is a concave function.
\end{lemma}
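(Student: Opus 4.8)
The plan is to derive everything from the single structural fact that, for a convex function, chord slopes are monotone, and then to read off the derivative inequalities by passing to one-sided limits. The only genuine input is the definition of convexity; no deep tools are needed, so the main task is careful bookkeeping of which difference quotients are admissible (i.e. keeping the auxiliary points inside $J$ and on the correct side of $x$ and $y$), and using that $x$ and $y$ are \emph{interior} points so that there is room on both sides.

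First I would establish the three-point (increasing slopes) lemma: if $f$ is convex on $J$ and $a<b<c$ lie in $J$, then
$$\frac{f(b)-f(a)}{b-a}\leq\frac{f(c)-f(a)}{c-a}\leq\frac{f(c)-f(b)}{c-b}.$$
This follows by writing $b=\lambda a+(1-\lambda)c$ with $\lambda=\frac{c-b}{c-a}\in(0,1)$, applying $f(b)\leq\lambda f(a)+(1-\lambda)f(c)$, and rearranging. Fixing an interior point $x$ and applying this with $a=x-h_1$, $b=x$, $c=x+h_2$ shows that the right difference quotient $h\mapsto\frac{f(x+h)-f(x)}{h}$ is nondecreasing on the admissible set $h>0$, that the left difference quotient $h\mapsto\frac{f(x)-f(x-h)}{h}$ is nonincreasing on $h>0$, and that every left quotient is dominated by every right quotient. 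Because $x$ is interior, each family is bounded (a left quotient bounds the right ones from below and vice versa), so both monotone limits exist and are finite; this gives the existence of $f_{-}^{\prime}(x)$ and $f_{+}^{\prime}(x)$ and, upon passing to the limit, the inequality $f_{-}^{\prime}(x)\leq f_{+}^{\prime}(x)$ (and likewise at $y$).

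Next, let $x<y$ be interior points. For $h>0$ with $x+h\leq y$, the three-point lemma with $a=x$, $b=x+h$, $c=y$ gives $\frac{f(x+h)-f(x)}{h}\leq\frac{f(y)-f(x)}{y-x}$, and letting $h\downarrow 0$ yields $f_{+}^{\prime}(x)\leq\frac{f(y)-f(x)}{y-x}$. Symmetrically, for $h>0$ with $x\leq y-h$, the three-point lemma with $a=x$, $b=y-h$, $c=y$ gives $\frac{f(y)-f(x)}{y-x}\leq\frac{f(y)-f(y-h)}{h}$, and letting $h\downarrow 0$ yields $\frac{f(y)-f(x)}{y-x}\leq f_{-}^{\prime}(y)$. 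Chaining these with $f_{-}^{\prime}(x)\leq f_{+}^{\prime}(x)$ and $f_{-}^{\prime}(y)\leq f_{+}^{\prime}(y)$ produces the full displayed chain.

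Finally, the concave case is obtained for free by applying the convex case to $-f$ and using $(-f)_{\pm}^{\prime}=-f_{\pm}^{\prime}$, which reverses each of the five inequalities. As noted, there is no substantive obstacle here; the statement is classical and the proof is purely a matter of organizing the elementary slope estimates, which is why it can simply be invoked from the literature.
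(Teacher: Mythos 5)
Your proof is correct. The paper does not prove this lemma at all---it simply cites it from Niculescu--Persson \cite[Theorem 1.4.2]{NIC}---and your argument is precisely the standard one found there: the three-chord inequality obtained from the definition of convexity with $\lambda=\frac{c-b}{c-a}$, monotonicity and boundedness of the one-sided difference quotients at an interior point (which gives existence and finiteness of $f_{\pm}^{\prime}$), passage to the limit to obtain the five-term chain, and reduction of the concave case to the convex case via $-f$. All the bookkeeping (admissible $h$, use of interiority to have room on both sides) is handled correctly, so this is a complete and valid substitute for the citation.
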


The following result is the essence of our first theorem.
\begin{lemma}\label{12}
	Let $f:[0,\infty)\to[0,\infty)$ be a continuous function and $[m,M]\subseteq [0,\infty)$. Let $\sigma$ be a matrix mean, and $A, B\in \mathbb{P}_{n}$ with spectra in $[m,M]$. If $f$ is convex, then
	{\small\begin{align*}
		(a(A-mI)+f(m)I) \sigma (a(B-mI)+ f(m)I) &\leq f(A)\sigma f(B)\\
&\leq (b(A-mI)+f(m)I) \sigma ( b(B-mI)+f(m)I),
	\end{align*}}
	where $a=f_{+}^{\prime}(m)$ and $f_{-}^{\prime}(M)=b.$ If $f$ is concave, then the inequality is reversed.
	\end{lemma}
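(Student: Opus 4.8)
The plan is to reduce the matrix statement to a pair of scalar inequalities on the interval $[m,M]$ and then feed those into the defining properties of a matrix mean. First I would observe that convexity of $f$ on $[m,M]$, together with Lemma~\ref{lemma21}, gives for every $t\in[m,M]$ the two-sided bound
\begin{align*}
a(t-m)+f(m)\leq f(t)\leq b(t-m)+f(m),
\end{align*}
where $a=f_+'(m)$ and $b=f_-'(M)$. The left inequality is just the statement that the graph of a convex function lies above its right-hand supporting line at $m$ (with slope $a$), and the right inequality says the graph lies below the chord through $(m,f(m))$ of slope at most $b=f_-'(M)$; more precisely Lemma~\ref{lemma21} gives $\frac{f(t)-f(m)}{t-m}\le f_-'(M)=b$ for $m<t\le M$, which rearranges to the claimed bound. (When $f$ is concave all three inequalities reverse, by the last sentence of Lemma~\ref{lemma21}.)

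Next I would apply the standard functional calculus: since $A$ has spectrum in $[m,M]$, the scalar inequality $a(t-m)+f(m)\le f(t)\le b(t-m)+f(m)$ on $[m,M]$ yields the operator inequalities
\begin{align*}
a(A-mI)+f(m)I\;\leq\; f(A)\;\leq\; b(A-mI)+f(m)I,
\end{align*}
and likewise with $A$ replaced by $B$. Note all six matrices appearing here are positive semidefinite: $a(A-mI)+f(m)I\geq a\cdot 0+f(m)I=f(m)I\geq 0$ since $a\geq 0$ (because $f\geq 0$ and $f(0)=0$ force $f$ nondecreasing near $0$, hence $a=f_+'(m)\ge 0$) and $f(m)\geq 0$; similarly $f(A)\geq 0$ and $b(A-mI)+f(m)I\geq 0$. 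So all arguments lie in $\mathbb{P}_n$ and the matrix mean $\sigma$ is defined on every relevant pair.

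Finally I would invoke monotonicity of $\sigma$, that is, property (i) in the definition of a matrix mean: from $a(A-mI)+f(m)I\leq f(A)$ and $a(B-mI)+f(m)I\leq f(B)$ we get
\begin{align*}
\big(a(A-mI)+f(m)I\big)\,\sigma\,\big(a(B-mI)+f(m)I\big)\;\leq\; f(A)\,\sigma\, f(B),
\end{align*}
and from $f(A)\leq b(A-mI)+f(m)I$ and $f(B)\leq b(B-mI)+f(m)I$ we get the matching upper bound, which is exactly the claimed chain. The concave case follows verbatim from the reversed scalar inequalities and the same monotonicity of $\sigma$. The only genuinely delicate point is the bookkeeping on signs—checking that $a\geq 0$ (so the lower bound is actually in $\mathbb{P}_n$) and that $f(m)\geq 0$—but this is immediate from the hypotheses $f:[0,\infty)\to[0,\infty)$ and $f(0)=0$ together with convexity; everything else is a routine combination of Lemma~\ref{lemma21}, the functional calculus, and axiom (i) for matrix means.
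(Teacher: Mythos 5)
Your argument is essentially identical to the paper's proof: both derive the scalar two-sided bound $a(t-m)+f(m)\leq f(t)\leq b(t-m)+f(m)$ from Lemma \ref{lemma21}, transfer it to $A$ and $B$ by the functional calculus, and conclude via the monotonicity axiom (i) of matrix means. The only difference is your added check that the bounding matrices lie in $\mathbb{P}_n$, which is a reasonable point the paper glosses over, but note that your justification of $a\geq 0$ invokes $f(0)=0$, a hypothesis of the later theorem rather than of this lemma as stated.
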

\begin{proof}
	It follows from Lemma \ref{lemma21} that for each $x$ with $m< x\leq M,$ we have
	$$a=f_{+}^{\prime}(m)\leq \frac{f(x)-f(m)}{x-m}\leq f_{-}^{\prime}(x)\leq f_{-}^{\prime}(M)=b.$$ The last inequality follows from the fact that $f^{\prime}_-$ is increasing. Hence,
	$$a(x-m)+f(m)\leq f(x) \leq b(x-m)+f(m).$$
	For all positive matrices $A$ and $B$, whose spectra are contained in the interval $[m,M]$, functional calculus implies that
	$$a(A-mI)+f(m)I\leq f(A) \leq b(A-mI)+f(m)I$$
	and $$a(B-mI)+f(m)I\leq f(B) \leq b(B-mI)+f(m)I.$$
	It follows from property (i) of matrix means that
	{\small\begin{align*}
		(a(A-mI)+f(m)I) \sigma (a(B-mI)+ f(m)I)& \leq f(A)\sigma f(B)\\
&\leq (b(A-mI)+f(m)I) \sigma ( b(B-mI)+f(m)I).
	\end{align*}}
	If $f$ is concave, the reverse inequality can be proved in the same fashion.
\end{proof}

\bigskip

From the above lemmas and property (ii) of matrix means, we conclude our first result.

\begin{theorem}\label{2.1}
	Let $f$ be a nonnegative differentiable convex function on $[0,\infty)$ with $f(0)=0$ and let $m,M$ be positive scalars with $m<M$. If $\sigma$ is a matrix mean, then
\begin{equation}\label{(2.1)}
f^{\prime}(0)(A \sigma B)\leq \frac{f(m)}{m}(A\sigma B)\leq f(A)\sigma f(B)\leq \frac{f(M)}{M}(A\sigma B)\leq f^{\prime}(M)(A\sigma B),
\end{equation}
and
\begin{equation}\label{(2.9)}
f^{\prime}(0)(A \sigma B)\leq \frac{f(m)}{m}(A\sigma B)\leq f(A\sigma B)\leq \frac{f(M)}{M}(A\sigma B)\leq f^{\prime}(M)(A\sigma B)
\end{equation}
hold for all $A,B\in \mathbb{P}_{n}^+$, whose spectra are contained in $[m,M]$. If the function $f$ is concave, then the inequalities are reversed.
\end{theorem}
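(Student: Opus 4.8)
The plan is to derive Theorem \ref{2.1} from Lemma \ref{12} and Lemma \ref{lemma21} by choosing the special values $m' = 0$ in the linear bounds, exploiting $f(0)=0$, and then invoking the transformer inequality (property (ii) of matrix means) together with monotonicity.

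First I would apply Lemma \ref{12} with the left endpoint taken to be $0$ rather than $m$; that is, use the interval $[0,M]$ containing the spectra of $A$ and $B$. Since $f(0)=0$, the lower bound $a(x-0)+f(0) = f'_{+}(0)x$ and the upper bound $b(x-0)+f(0)=f'_{-}(M)x$ become genuinely \emph{linear} (homogeneous) in $x$. Because $\sigma$ is positively homogeneous — this follows from $A\sigma B = A^{1/2}h(A^{-1/2}BA^{-1/2})A^{1/2}$ and $h(1)=1$, or directly from axioms (i) and (iv) — we get $(\alpha A)\sigma(\alpha B) = \alpha(A\sigma B)$ for $\alpha>0$. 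Hence Lemma \ref{12} (with left endpoint $0$) collapses to
$$f'(0)\,(A\sigma B) \leq f(A)\sigma f(B) \leq f'(M)\,(A\sigma B),$$
using $f'_{+}(0)=f'(0)$ and $f'_{-}(M)=f'(M)$ by differentiability. To sharpen the outer bounds to the inner ones involving $f(m)/m$ and $f(M)/M$, I would instead feed Lemma \ref{12} the chords through the points $(0,0)$ and $(m,f(m))$, and through $(0,0)$ and $(M,f(M))$: for a convex $f$ with $f(0)=0$ the slope $f(t)/t$ is nondecreasing, so on $[m,M]$ one has $\frac{f(m)}{m}\,x \le f(x) \le \frac{f(M)}{M}\,x$, and also $f'(0)\le \frac{f(m)}{m}$ and $\frac{f(M)}{M}\le f'(M)$; applying functional calculus to $A$ and $B$, then monotonicity (axiom (i)) of $\sigma$, and finally homogeneity gives exactly the chain \eqref{(2.1)}. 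For \eqref{(2.9)}, the same scalar inequalities $\frac{f(m)}{m}\,x \le f(x)\le \frac{f(M)}{M}\,x$ are applied directly to the single positive matrix $A\sigma B$, whose spectrum lies in $[m,M]$ because $mI \le A,B \le MI$ forces $mI = mI\sigma mI \le A\sigma B \le MI\sigma MI = MI$ by axioms (i) and (iv).

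The concave case is handled identically: Lemma \ref{lemma21} reverses all chord/derivative inequalities, and monotonicity of $\sigma$ (axiom (i)) preserves the reversed matrix inequalities, so the whole chain reverses.

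I expect the only genuine subtlety to be the \emph{homogeneity} of the matrix mean, which is not one of the four listed axioms but must be extracted from them (or from the integral representation) before the linear-in-$x$ bounds can be pulled through $\sigma$; everything else is bookkeeping with Lemma \ref{lemma21} and the slope-monotonicity of $f(t)/t$. A secondary point worth a sentence is verifying that the spectrum of $A\sigma B$ really is contained in $[m,M]$, which as noted follows from the monotonicity and normalization axioms.
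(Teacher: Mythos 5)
Your proposal is correct and follows essentially the same route as the paper: the chord inequalities $\tfrac{f(m)}{m}x\le f(x)\le \tfrac{f(M)}{M}x$ on $[m,M]$ (obtained from convexity and $f(0)=0$), functional calculus applied to $A$, $B$, and to $A\sigma B$, monotonicity of $\sigma$, homogeneity of $\sigma$, and Lemma \ref{lemma21} for the outer bounds $f'(0)\le f(m)/m$ and $f(M)/M\le f'(M)$. The only cosmetic difference is that the paper extracts homogeneity $(\alpha A)\sigma(\alpha B)=\alpha(A\sigma B)$ from the transformer property (ii) with $C=\sqrt{\alpha}\,I$ rather than from axioms (i) and (iv) (which alone do not suffice), but your alternative justification via the representation $A\sigma B=A^{1/2}h(A^{-1/2}BA^{-1/2})A^{1/2}$ is equally valid.
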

\begin{proof}
	For $m\leq x\leq M,$ the convexity of $f$ yields
\begin{align}\label{k11}
f(x)=f\left(\frac{x}{M}M\right)\leq \frac{x}{M}f(M).
\end{align}
		This ensures that
		$$f(A)\leq \frac{f(M)}{M}A\qquad \mbox{and}\qquad f(B)\leq \frac{f(M)}{M}B $$
hold for all $A,B\in \mathbb{P}_{n}^+$ with spectra contained in $[m,M]$.
	By properties \rm{(i)} and \rm{(ii)} of matrix means, we get
\begin{align}\label{k1}
f(A)\sigma f(B)\leq \frac{f(M)}{M}(A\sigma B).
\end{align}
Again, it follows from the convexity of $f$ that
\begin{align}\label{k22}
f(x)=f\left(\frac{x}{m}m\right)\geq \frac{x}{m}f(m).
\end{align}
As mentioned above, this ensures that
\begin{align}\label{k2}
f(A)\sigma f(B)\geq \frac{f(m)}{m}(A\sigma B).
\end{align}
Utilizing Lemma \ref{lemma21} for the differentiable convex function $f$ we have
$$f^{\prime}(0)x\leq \frac{f(m)}{m}x \qquad \mbox{and}\qquad \frac{f(M)}{M}x\leq f^{\prime}(M)x,\qquad (0<m\leq x\leq M).$$
Consequently,
\begin{align}\label{k3}
f^{\prime}(0)(A\sigma B)\leq \frac{f(m)}{m}(A\sigma B)\qquad \mbox{and}\qquad \frac{f(M)}{M}(A\sigma B)\leq f^{\prime}(M)(A\sigma B).
\end{align}
It follows from \eqref{k1}, \eqref{k2} and \eqref{k3} that
$$f^{\prime}(0)(A \sigma B)\leq \frac{f(m)}{m}(A\sigma B)\leq f(A)\sigma f(B)\leq \frac{f(M)}{M}(A\sigma B)\leq f^{\prime}(M)(A\sigma B).$$
This proves \eqref{(2.1)}. To prove \eqref{(2.9)}, note that the spectrum of $A\sigma B$ is contained in $[m,M]$. Applying functional calculus with $x=A\sigma B$ in \eqref{k11} and \eqref{k22} we obtain
\begin{align*}
f(A\sigma B) \leq \frac{f(M)}{M}A\sigma B \qquad \mbox{and}\qquad f(A\sigma B)\geq \frac{f(m)}{m}A\sigma B.
\end{align*}
Now combine the above inequalities with \eqref{k3} to derive \eqref{(2.9)}.
\end{proof}

\bigskip

\begin{example}
	For all $A,B\in \mathbb{P}_{n}$ with spectra in $[m,M]$, we have
	$$\frac{\log(M+1)}{M}\log(A+B+I)\leq \log(A+I)+\log(B+I).$$
To see this, take $f(x)=\log(x+1)$, which is a nonnegative differentiable concave function on $[0,\infty)$. If $\sigma=\nabla_{\frac{1}{2}}$, then making use of Theorem \ref{2.1} we obtain
$$\frac{\log(M+1)}{M}(A+B)\leq\log(A+I)+\log(B+I).$$
Since $\log(x+1)\leq x$ for every $x\geq0$, we have
$$\frac{\log(M+1)}{M}\log(A+B+I)\leq\frac{\log(M+1)}{M}(A+B).$$
This implies the desired result.
\end{example}
\begin{corollary}
Let $f$ be a nonnegative differentiable convex function on $[0,\infty)$ with $f(0)=0$. If $\sigma$ is a matrix mean, then
$$|||f(A)\sigma f(B)-f(A\sigma B)|||\leq (f^{\prime}(M)-f^{\prime}(0))|||A\sigma B||| $$
holds for all $A,B\in \mathbb{P}_{n}^+$ with spectra contained in $[m,M]$.
\end{corollary}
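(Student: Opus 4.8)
The plan is to derive the corollary directly from Theorem~\ref{2.1}, exploiting the fact that it confines \emph{both} $f(A)\sigma f(B)$ and $f(A\sigma B)$ to the same order interval determined by $A\sigma B$. Since $A,B\in\mathbb{P}_n^+$ have spectra in $[m,M]$, inequalities \eqref{(2.1)} and \eqref{(2.9)} give at once
\[
f^{\prime}(0)\,(A\sigma B)\ \leq\ f(A)\sigma f(B)\ \leq\ f^{\prime}(M)\,(A\sigma B)
\qquad\text{and}\qquad
f^{\prime}(0)\,(A\sigma B)\ \leq\ f(A\sigma B)\ \leq\ f^{\prime}(M)\,(A\sigma B).
\]
First I would set $X:=f(A)\sigma f(B)-f(A\sigma B)$, which is Hermitian because each of $f(A)\sigma f(B)$ and $f(A\sigma B)$ lies in $\mathbb{P}_n$. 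Subtracting the two chains yields $-C\leq X\leq C$ with $C:=\bigl(f^{\prime}(M)-f^{\prime}(0)\bigr)(A\sigma B)$; note $C\in\mathbb{P}_n$ because $f^{\prime}$ is nondecreasing by Lemma~\ref{lemma21}, whence $f^{\prime}(M)\geq f^{\prime}(0)$, and $A\sigma B\in\mathbb{P}_n^+$.

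The remaining ingredient is the standard fact that for $X\in\mathbb{H}_n$ and $C\in\mathbb{P}_n$ with $-C\leq X\leq C$ one has $|||X|||\leq|||C|||$ for every unitarily invariant norm. By Fan's dominance theorem it suffices to check the Ky Fan norms. Fixing $k$, write $X=X_+-X_-$ with $X_\pm\geq0$, and let $\Pi_+$ and $\Pi_-$ be the orthogonal projections onto the spans of those eigenvectors of $X$ whose positive, resp.\ negative, eigenvalues realize the $k$ largest values of $|\lambda(X)|$, so that $R:=\Pi_+-\Pi_-$ satisfies $-I\leq R\leq I$, $\operatorname{rank}(\Pi_++\Pi_-)\leq k$, and $\|X\|_{(k)}=\operatorname{tr}(XR)$. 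Then
\[
\|X\|_{(k)}=\operatorname{tr}(X\Pi_+)+\operatorname{tr}\bigl((-X)\Pi_-\bigr)\leq\operatorname{tr}(C\Pi_+)+\operatorname{tr}(C\Pi_-)\leq\|C\|_{(k)},
\]
the middle step using $X\leq C$ and $-X\leq C$, and the last using $C\geq0$ together with $\operatorname{rank}(\Pi_++\Pi_-)\leq k$. Applying this to the above $X$ and $C$ and invoking positive homogeneity of the norm gives $|||f(A)\sigma f(B)-f(A\sigma B)|||\leq\bigl(f^{\prime}(M)-f^{\prime}(0)\bigr)|||A\sigma B|||$.

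The one point that requires care is the last step: it is tempting but \emph{false} to pass from $-C\leq X\leq C$ to $|X|\leq C$, so the norm bound must genuinely be obtained via weak majorization of singular values (as above) rather than from an operator inequality for $|X|$. Everything else is routine bookkeeping — that differentiability of $f$ makes $f^{\prime}(0),f^{\prime}(M)$ meaningful and $f^{\prime}$ monotone, that $A\sigma B$ is positive definite, and that the two order chains may legitimately be subtracted.
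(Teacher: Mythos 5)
Your proof is correct and follows essentially the same route as the paper: sandwich both $f(A)\sigma f(B)$ and $f(A\sigma B)$ between $f^{\prime}(0)(A\sigma B)$ and $f^{\prime}(M)(A\sigma B)$ via Theorem~\ref{2.1}, subtract the two chains, and invoke the fact that $-C\leq X\leq C$ with $C\geq0$ implies $|||X|||\leq|||C|||$. The only difference is that you supply a (correct) proof of this last fact via Ky Fan norms and Fan dominance, whereas the paper simply cites it as known; your caveat that $-C\leq X\leq C$ does not yield $|X|\leq C$ is also well taken.
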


\bigskip

\begin{proof}
It follows from Theorem \ref{2.1} that
\begin{equation}\label{AM}
\begin{split}
f^{\prime}(0)(A \sigma B)&\leq f(A)\sigma f(B)\leq f^{\prime}(M)(A\sigma B),\\
f^{\prime}(0)(A \sigma B)&\leq f(A\sigma B)\leq f^{\prime}(M)(A\sigma B).
\end{split}
\end{equation}
Hence
$$(f^{\prime}(0)-f^{\prime}(M))(A\sigma B)\leq f(A)\sigma f(B)-f(A\sigma B)\leq(f^{\prime}(M)-f^{\prime}(0))(A\sigma B).$$
It is known that if $X\in \mathbb{H}_n$ and $Y\in \mathbb{P}_n$ satisfy $X<Y$ and $-X<Y$, then $|||X|||\leq|||Y|||$ for every unitarily invariant norm. Using this property, we get
$$|||f(A)\sigma f(B)-f(A\sigma B)|||\leq (f^{\prime}(M)-f^{\prime}(0))|||A\sigma B|||.$$
\end{proof}
The next result is a direct consequence of Theorem \ref{2.1}.
\begin{corollary}\label{2.4}
Let $A$ and $B$ be positive definite matrices with spectra in $[m,M]$ and let $\sigma$ be a matrix mean. 	
If $f$ is a nonnegative differentiable convex function on $[0,\infty)$ with $f(0)=0$, then
	\begin{itemize}
		\item[(i)] For every $j=1,\dots,n$, the eigenvalue inequalities
{\small\begin{align*}
f^{\prime}(0) \lambda_{j}(A\sigma B)\leq \frac{f(m)}{m}\lambda_{j}(A\sigma B)\leq\lambda_{j}(f(A)\sigma f(B))\leq \frac{f(M)}{M}\lambda_{j}(A\sigma B)\leq f^{\prime}(M) \lambda_{j}(A\sigma B)
\end{align*}}
hold.
		\item[(ii)] For every $k=1,\dots,n$, the inequalities
{\small\begin{align*}
\prod_{j=1}^{k}f^{\prime}(0)\lambda_{j}(A\sigma B)& \leq\prod_{j=1}^{k}\frac{f(m)}{m}\lambda_{j}(A\sigma B)\\
&\leq\prod_{j=1}^{k}\lambda_{j}(f(A)\sigma f(B))\leq\prod_{j=1}^{k}\frac{f(M)}{M}\lambda_{j}(A\sigma B)\leq\prod_{j=1}^{k}f^{\prime}(M)\lambda_{j}(A\sigma B)
\end{align*}}
hold provided that $\lambda_j(A\sigma B)>0$.
		\item[(iii)] For every unitarily invariant norm $|||\cdot|||$ on $\mathbb{M}_n$, the inequalities
 {\small\begin{align*}
 f^{\prime}(0)|||A\sigma B|||\leq\frac{f(m)}{m}|||A\sigma B|||\leq |||f(A)\sigma f(B)|||\leq\frac{f(M)}{M}|||A\sigma B|||\leq f^{\prime}(M)|||A\sigma B|||,
 \end{align*}}
 hold.
	\end{itemize}
 If $f$ is a nonnegative differentiable concave function, the above inequalities are reversed.
\end{corollary}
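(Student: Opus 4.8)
The plan is to read off all three families of inequalities from the operator inequalities of Theorem~\ref{2.1} by invoking only standard monotonicity properties of eigenvalues and of unitarily invariant norms on the positive cone. Throughout, the point to keep in mind is that, since $A,B\in\mathbb{P}_n^+$ and $\sigma$ is a matrix mean, each of the matrices $A\sigma B$, $f(A)\sigma f(B)$ and $f(A\sigma B)$ lies in $\mathbb{P}_n^+$; hence their eigenvalues coincide with their singular values and are strictly positive, and the scalars $f'(0)$, $\frac{f(m)}{m}$, $\frac{f(M)}{M}$, $f'(M)$ are all nonnegative.

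For (i) I would apply Weyl's monotonicity principle: if $X,Y\in\mathbb{H}_n$ and $X\leq Y$, then $\lambda_j(X)\leq\lambda_j(Y)$ for every $j=1,\dots,n$. Combined with the identity $\lambda_j(cX)=c\,\lambda_j(X)$, valid for every scalar $c\geq0$, the chain of operator inequalities \eqref{(2.1)} yields the stated chain of eigenvalue inequalities term by term; the argument for \eqref{(2.9)}, in which $f(A)\sigma f(B)$ is replaced by $f(A\sigma B)$, is identical. For (ii), once (i) is available, every quantity $\lambda_j(\cdot)$ occurring there is nonnegative, so multiplying the inequalities of (i) over $j=1,\dots,k$ preserves the order and produces exactly the asserted Ky Fan–type product inequalities. (Equivalently, one may pass to the $k$th antisymmetric tensor power $\wedge^k$, under which $0\leq X\leq Y$ becomes $0\leq\wedge^kX\leq\wedge^kY$ while $\prod_{j=1}^k\lambda_j(X)=\lambda_1(\wedge^kX)$, and then appeal again to Weyl monotonicity.) The hypothesis $\lambda_j(A\sigma B)>0$ merely guarantees that no factor degenerates, and it is automatic here since $A\sigma B\in\mathbb{P}_n^+$.

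For (iii) I would use the fact that $0\leq X\leq Y$ in $\mathbb{P}_n$ implies $|||X|||\leq|||Y|||$ for every unitarily invariant norm: by (i) one has $\sigma_j(X)=\lambda_j(X)\leq\lambda_j(Y)=\sigma_j(Y)$ for all $j$, hence $\sum_{j=1}^k\sigma_j(X)\leq\sum_{j=1}^k\sigma_j(Y)$ for all $k$, and Ky Fan's dominance theorem then gives $|||X|||\leq|||Y|||$. Together with the homogeneity $|||cX|||=c\,|||X|||$ for $c\geq0$ and the operator inequalities \eqref{(2.1)}–\eqref{(2.9)}, this yields the norm inequalities in (iii). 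Finally, if $f$ is concave, Theorem~\ref{2.1} reverses every operator inequality, and since Weyl monotonicity, the multiplicative step in (ii), and Fan dominance are all order-preserving, each of (i)–(iii) reverses accordingly.

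There is no genuine obstacle here beyond bookkeeping: the whole content is that Theorem~\ref{2.1} is an operator inequality between positive semidefinite matrices, and that the maps $X\mapsto\lambda_j(X)$, $X\mapsto\prod_{j=1}^k\lambda_j(X)$ (on the positive cone), and $X\mapsto|||X|||$ are all monotone for the Löwner order. The only care required is to notice that everything in sight is positive semidefinite, so that "eigenvalue'', "singular value'', and the compound/tensor-power reformulations agree and the product step is legitimate.
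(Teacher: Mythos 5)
Your proof is correct and matches the paper's (implicit) argument: the paper states Corollary \ref{2.4} as a ``direct consequence'' of Theorem \ref{2.1} without writing out details, and the Weyl monotonicity, product (antisymmetric tensor power), and Fan dominance steps you supply are exactly the standard route being invoked. Your observation that $\lambda_j(A\sigma B)>0$ is automatic for $A,B\in\mathbb{P}_n^+$ is also accurate and harmless.
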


\bigskip

\begin{remark}
Let $|||\cdot|||$ be a unitarily invariant norm on $\mathbb{M}_n$. Considering the matrix arithmetic mean, it follows from Corollary \ref{2.4} (iii) that the inequality
{\small\begin{align}\label{k-w}
\frac{f(m)}{m}|||A+ B|||\leq |||f(A)+ f(B)|||\leq\frac{f(M)}{M}|||A+ B|||
\end{align}}
holds for any differentiable convex function $f:[0,\infty)\to[0,\infty)$ with $f(0)=0$. If $f$ is concave, a reverse inequality holds. Bourin and Uchiyama \cite[Theorem 1.2]{BOU} (see also \cite{10.}) showed that
\begin{align}\label{bou}
|||f(A)+f(B)|||\leq |||f(A+B)|||
\end{align}
 is valid for every convex function $f:[0,\infty)\to[0,\infty)$ with $f(0)=0$ and all $A,B\in\mathbb{P}_n$. Now if $M\leq A+B$ and $f(x)/x$, $x\neq 0$ is increasing (for example, take $f(x)=x^r$, when $r\geq1$), then
 $$\frac{f(M)}{M}\leq \frac{|||f(A+B)|||}{|||A+ B|||}.$$
Hence, inequality \eqref{k-w} provides a better estimation than \eqref{bou}.
\end{remark}

\bigskip

\begin{remark}
The norm inequalities in Corollary \ref{2.4} (iii) do not hold for normal matrices, in general.
 For example, take $m=\min\{|\lambda_{j}(A)|,|\lambda_{j}(B)|\}$, $M=\max\{|\lambda_{j}(A)|,|\lambda_{j}(B)|\}$ and $f(x)=x^{2}$. With
	\begin{equation*}
		A =
		\begin{pmatrix}
			2 & 0 \\
			0 & -1
		\end{pmatrix}
		~\text{and} \hspace{0.3cm} B =
		\begin{pmatrix}
			-2 & 0 \\
			0 & 1
		\end{pmatrix}
	\end{equation*}
	we have
{\small\begin{align*}
\|f(|A|)+f(|B|)\|=8,\quad \|f(|A|+|B|)\|=16,\quad \frac{f(M)}{M}\|A+B\|=0,\quad \text{and}\,\, f^{\prime}(M)\|A+B\|=0.
\end{align*}}
	Therefore $$\|f(|A|)+f(|B|)\|\nleq\frac{f(M)}{M}\|A+B\|= f^{\prime}(M)\|A+B\| ~ \text{and} ~$$ $$ \|f(|A|+|B|)\|\nleq\frac{f(M)}{M}\|A+B\|= f^{\prime}(M)\|A+B\|.$$
\end{remark}

However, we can generalize the left inequalities in Corollary \ref{2.4} (iii) to normal matrices. To achieve this goal, we need the following lemma.
\begin{lemma} \cite[Proposition 3.4]{BOU}\label{k-33}
	If $A,B\in \mathbb{M}_{n}$ are normal matrices, then
	$$|||A+B|||\leq |||~|A|+|B|~|||$$
for all unitarily invariant norms $|||\cdot|||$ on $\mathbb{M}_{n}$.
\end{lemma}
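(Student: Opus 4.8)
The plan is to reduce the statement to a single positivity assertion about a $2n\times 2n$ block matrix built from each of the two normal summands. The crucial step is to show that for every normal $N\in\mathbb{M}_n$,
$$\begin{pmatrix} |N| & N\\ N^{*} & |N| \end{pmatrix}\ge 0 .$$
I would prove this from the spectral decomposition $N=\sum_{k}\lambda_{k}P_{k}$, with mutually orthogonal projections $P_{k}$ summing to $I$: then $N^{*}=\sum_{k}\overline{\lambda_{k}}P_{k}$ and $|N|=(N^{*}N)^{1/2}=\sum_{k}|\lambda_{k}|P_{k}$, so the block matrix equals $\sum_{k}\left(\begin{smallmatrix}|\lambda_{k}| & \lambda_{k}\\ \overline{\lambda_{k}} & |\lambda_{k}|\end{smallmatrix}\right)\otimes P_{k}$, which is a sum of Kronecker products of positive semidefinite $2\times 2$ scalar matrices (each is Hermitian with nonnegative trace and vanishing determinant) with orthogonal projections, hence positive semidefinite. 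Equivalently, writing $N=U|N|$ with $U$ unitary commuting with $|N|$, the block matrix is unitarily congruent to $\left(\begin{smallmatrix}1&1\\1&1\end{smallmatrix}\right)\otimes|N|\ge 0$. This is the only place where normality enters, and the assertion genuinely fails for non-normal matrices.

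Then, adding this inequality for $A$ and for $B$, one gets $\left(\begin{smallmatrix}X & Y\\ Y^{*} & X\end{smallmatrix}\right)\ge 0$ with $X:=|A|+|B|\ge 0$ and $Y:=A+B$. Conjugating by the unitary $\mathrm{diag}(I,-I)$ also gives $\left(\begin{smallmatrix}X & -Y\\ -Y^{*} & X\end{smallmatrix}\right)\ge 0$, and averaging the two yields
$$\begin{pmatrix} X & 0\\ 0 & X \end{pmatrix}\ \ge\ \pm\begin{pmatrix} 0 & Y\\ Y^{*} & 0 \end{pmatrix}.$$
Replacing $X$ by $X+\varepsilon I$ (and letting $\varepsilon\downarrow 0$ at the end) to make these inequalities strict, the fact recalled earlier in the paper — that a Hermitian $W$ with $-T<W<T$ for some positive definite $T$ satisfies $|||W|||\le|||T|||$ — applies on $\mathbb{M}_{2n}$ and gives
$$|||\begin{pmatrix} 0 & Y\\ Y^{*} & 0 \end{pmatrix}|||\ \le\ |||\begin{pmatrix} X & 0\\ 0 & X \end{pmatrix}|||$$
for every unitarily invariant norm on $\mathbb{M}_{2n}$. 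Since the singular values of $\left(\begin{smallmatrix}0 & Y\\ Y^{*} & 0\end{smallmatrix}\right)$ are those of $Y$, each with multiplicity two, while those of $X\oplus X$ are those of $X=|A|+|B|\ge 0$, each with multiplicity two, comparing Ky Fan $2k$-norms yields $\sum_{j=1}^{k}\sigma_{j}(A+B)\le\sum_{j=1}^{k}\sigma_{j}(|A|+|B|)$ for all $k$. By Ky Fan's dominance theorem this weak majorization is equivalent to $|||A+B|||\le|||\,|A|+|B|\,|||$ for every unitarily invariant norm on $\mathbb{M}_{n}$, which is the claim.

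I expect the single genuinely nontrivial point to be the block positivity $\left(\begin{smallmatrix}|N| & N\\ N^{*} & |N|\end{smallmatrix}\right)\ge 0$ for normal $N$; once that is in hand, the remaining steps — additivity, the $\mathrm{diag}(I,-I)$ symmetrization, and the transfer from $\mathbb{M}_{2n}$ back to $\mathbb{M}_{n}$ through doubled singular values and Ky Fan dominance — are routine.
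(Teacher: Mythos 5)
The paper offers no proof of this lemma; it is quoted verbatim from Bourin--Uchiyama \cite[Proposition 3.4]{BOU}. Your argument is correct and complete, and it is essentially the standard (and the cited source's) proof: the block positivity $\bigl(\begin{smallmatrix}|N| & N\\ N^{*} & |N|\end{smallmatrix}\bigr)\geq 0$ for normal $N$ is verified correctly via the spectral (or polar) decomposition, normality is indeed indispensable there, and the passage from $\bigl(\begin{smallmatrix}X & \pm Y\\ \pm Y^{*} & X\end{smallmatrix}\bigr)\geq 0$ to $|||Y|||\leq|||X|||$ through doubled singular values and Fan dominance is sound. One cosmetic point: ``averaging'' the two block inequalities literally yields only $X\oplus X\geq 0$; what you actually use is that each of the two inequalities separately rearranges to $X\oplus X\geq \mp\bigl(\begin{smallmatrix}0 & Y\\ Y^{*} & 0\end{smallmatrix}\bigr)$, which is what the norm comparison requires.
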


\begin{theorem}\label{thk}
	Let $A, B\in \mathbb{M}_{n}$ be normal matrices such that the spectra of $|A|$ and $|B|$ are contained in $[m,M]$. If $f$ is a nonnegative differentiable convex function on $[0,\infty)$ with $f(0)=0$, then
\begin{align}\label{kk-1}
f^{\prime}(0)|||A+B||| \leq\frac{f(m)}{m}||| A+B||| \leq|||~f(|A|)+f(|B|)~|||
\end{align}
	and
\begin{align}\label{kk-2}
f^{\prime}(0)||| A+B ||| \leq\frac{f(2m)}{2m}||| A+B ||| \leq||| f(|A|+|B|)|||.
\end{align}
	 If $f$ is a differentiable concave function, then
\begin{align*}
f^{\prime}(M)||| A+B ||| \leq\frac{f(M)}{M}||| A+B||| \leq |||f(|A|)+f(|B|)|||
\end{align*}
	and
\begin{align*}
f^{\prime}(M)||| A+B ||| \leq\frac{f(2M)}{2M}||| A+B||| \leq ||| f(|A|+|B|)|||.
\end{align*}
\end{theorem}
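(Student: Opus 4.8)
The plan is to reduce everything to the already-proved inequalities for positive definite matrices (Corollary \ref{2.4}(iii) with the matrix arithmetic mean, i.e. inequality \eqref{k-w}) combined with Lemma \ref{k-33}. The crucial observation is that $|A|$ and $|B|$ are positive \emph{semi}definite (indeed definite, since $A,B$ are normal with spectra of $|A|,|B|$ in $[m,M]$ and $m>0$), so the scalar convexity estimates from the proof of Theorem \ref{2.1} apply to them directly.

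First I would establish \eqref{kk-1}. Since $A$ and $B$ are normal, Lemma \ref{k-33} gives $|||A+B|||\leq |||\,|A|+|B|\,|||$. Now apply the left-hand chain of \eqref{k-w} (equivalently Corollary \ref{2.4}(iii) with $\sigma=\nabla_{1/2}$, up to the harmless factor of $2$, or more directly the pointwise estimate $f(x)\geq \tfrac{f(m)}{m}x$ for $x\in[m,M]$ together with $f'(0)x\leq \tfrac{f(m)}{m}x$) to the positive definite matrices $|A|,|B|$: this yields $\tfrac{f(m)}{m}\,|||\,|A|+|B|\,|||\leq |||\,f(|A|)+f(|B|)\,|||$ and $f'(0)\,|||\,|A|+|B|\,|||\leq \tfrac{f(m)}{m}\,|||\,|A|+|B|\,|||$. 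Chaining these with the Bourin--Uchiyama bound $|||A+B|||\leq |||\,|A|+|B|\,|||$ gives exactly \eqref{kk-1}. For the concave case one uses the reversed pointwise estimates $f(x)\leq \tfrac{f(M)}{M}x$ and $\tfrac{f(M)}{M}x\leq f'(M)x$ in the same way, again prepending $|||A+B|||\leq |||\,|A|+|B|\,|||$.

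Next, for \eqref{kk-2}, the spectrum of $|A|+|B|$ lies in $[2m,2M]$ (each eigenvalue of $|A|$ and of $|B|$ is in $[m,M]$, hence any eigenvalue of the sum is between $2m$ and $2M$ by Weyl's inequalities). So apply the single-variable convexity estimate of Theorem \ref{2.1}'s proof with the interval $[2m,2M]$ to $X=|A|+|B|\in\mathbb{P}_n^+$: from $f(x)\geq\tfrac{f(2m)}{2m}x$ on $[2m,2M]$ and the functional calculus, $f(|A|+|B|)\geq \tfrac{f(2m)}{2m}(|A|+|B|)$, whence $\tfrac{f(2m)}{2m}\,|||\,|A|+|B|\,|||\leq |||\,f(|A|+|B|)\,|||$; combine with $f'(0)x\leq\tfrac{f(2m)}{2m}x$ on that interval and with Lemma \ref{k-33} to obtain \eqref{kk-2}. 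The concave case is identical with the inequalities reversed and $2M$ in place of $2m$.

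I do not anticipate a serious obstacle here; the result is essentially a packaging of Lemma \ref{k-33} with the pointwise comparisons already used to prove Theorem \ref{2.1}. The only point requiring a little care is bookkeeping the correct interval: $[m,M]$ governs $f(|A|),f(|B|)$ and the comparison $f'(0)$ versus $f(m)/m$, whereas $[2m,2M]$ governs $f(|A|+|B|)$ and the comparison $f'(0)$ versus $f(2m)/(2m)$; one must invoke the monotonicity of $f'$ (via Lemma \ref{lemma21}) on the appropriate interval in each case, and check that $f'(0)\le f(2m)/(2m)$ still holds — which it does, since $\tfrac{f(2m)}{2m}=\tfrac{f(2m)-f(0)}{2m-0}\ge f'_+(0)$ by Lemma \ref{lemma21}. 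Note also that Lemma \ref{k-33} is what restricts the argument to the \emph{left}-hand (lower) inequalities in the convex case, as the earlier remark already flags via an explicit $2\times2$ counterexample for the upper bounds.
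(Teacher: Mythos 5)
Your proposal is correct and follows essentially the same route as the paper: Lemma \ref{k-33} to pass from $|||A+B|||$ to $|||\,|A|+|B|\,|||$, then the pointwise convexity estimates behind Theorem \ref{2.1} applied to $|A|$ and $|B|$ (for \eqref{kk-1}) or to $|A|+|B|$ (for \eqref{kk-2}). The only cosmetic difference is in \eqref{kk-2}: the paper applies Theorem \ref{2.1} to $\tfrac{|A|+|B|}{2}$ and then rescales $A\mapsto 2A$, $B\mapsto 2B$, whereas you work directly on the interval $[2m,2M]$ — the two are equivalent.
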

\begin{proof}
It follows from Lemma \ref{k-33} that
\begin{align}\label{kk-k1}
f^{\prime}(0)||| A+B ||| \leq \frac{f(m)}{m} ||| A+B ||| \leq \frac{f(m)}{m}||| ~|A|+|B|~|||.
\end{align}
Taking $\sigma=\nabla_{1/2}$ and 	applying Theorem \ref{2.1} to the positive matrices $|A|$ and $|B|$, we arrive at
\begin{align}\label{kk-k2}
\frac{f(m)}{m}||| ~|A|+|B|~||| \leq||| f(|A|)+f(|B|)|||.
\end{align}
Inequality \eqref{kk-1} now follows from \eqref{kk-k1} and \eqref{kk-k2}. For \eqref{kk-2}, use Lemma \ref{k-33} and Theorem \ref{2.1} to obtain
	\begin{align}
		f^{\prime}(0)\left|\left|\left|\frac{A+B}{2}\right|\right|\right|\leq \frac{f(m)}{m}\left|\left|\left|\frac{A+B}{2}\right|\right|\right|
\leq \frac{f(m)}{m}\left|\left|\left|\frac{|A|+|B|}{2}\right|\right|\right|
\leq\left|\left|\left|f\left(\frac{|A|+|B|}{2}\right)\right|\right|\right|.\nonumber
	\end{align}
	Replacing $A$ with $2A$ and $B$ with $2B$, we arrive at \eqref{kk-2}.
	Inequalities for concave functions can be similarly proved.
\end{proof}
\bigskip

\begin{remark}
\textbf{1.}\,  It has been shown in \cite[Theorem 2.1]{BOU2} that if $A, B\in \mathbb{M}_{n}$ are normal matrices and $f:[0,\infty)\to[0,\infty)$ is a concave function, then
 \begin{align}\label{bou2}
||| f(|A+B|)|||\leq ||| f(|A|)+f(|B|)|||
 \end{align}
 holds for all unitarily invariant norms $|||\cdot|||$ on $\mathbb{M}_{n}$. Let $A$ and $B$ be normal matrices such that the spectra of $|A|$ and $|B|$ are contained in $[m,M]$ and let $f:[0,\infty)\to[0,\infty)$ be a concave function. If $|A+B|\geq M$ and $f(x)/x$, $x\neq 0$ is decreasing, then
 $$\frac{||| f(|A+B|)|||}{||| A+B ||| }\leq \frac{f(M)}{M}.$$
Therefore, Theorem \ref{thk} gives a better estimation than \eqref{bou2}:
 $$||| f(|A+B|)|||\leq \frac{f(M)}{M}||| A+B||| \leq |||f(|A|)+f(|B|)|||.$$

\textbf{2.}\,   Let $\sigma=\sharp_\alpha$ with $\alpha\in[0,1]$ and $f(x)=x^r$, where $r\geq1$. Then Theorem \ref{2.1} ensures
   $$\lambda_{min}^{r-1}\, A\sharp_\alpha B \leq  A^r\sharp_\alpha B^r\leq \lambda_{max}^{r-1}\,A\sharp_\alpha B $$
   in which $\lambda_{min}=\min\{\lambda(A),\lambda(B)\}$ and $\lambda_{max}=\max\{\lambda(A),\lambda(B)\}$.

   If $\sigma$ is the matrix mean corresponding to the matrix monotone function $x\mapsto\log x$, then with
   $f(x)=x^r$, where $r\geq1$,  Theorem \ref{2.1} implies
   $$\lambda_{min}^{r-1}\, S(A|B) \leq  S(A^r|B^r)\leq \lambda_{max}^{r-1}\,S(A|B) $$
   in which $S(A|B)=A^{1/2}\log(A^{-1/2}BA^{-1/2})A^{1/2}$  is the relative operator entropy.

\textbf{3.}\, We remark that in the special case $\sigma=\sharp_\alpha$ with $\alpha\in[0,1]$ and $f(x)=x^r$, where $r\geq1$, the Ando--Hiai inequality \cite{AHI} (see also \cite{kianm} and the references therein) provides a sharp estimation. In fact, the Ando--Hiai inequality states that
\begin{align}\label{AH}
 A^r\sharp_\alpha B^r \leq \|A\sharp_\alpha B\|^{r-1} A\sharp_\alpha B\qquad (0\leq \alpha\leq1,\,\, r\geq1).
\end{align}
Now, if $A$ and $B$ are two positive definite matrices, $\|A\|\leq \|B\|$, $f(x)=x^r$ and $\sigma=\sharp_\alpha$, then Theorem \ref{2.1} gives
 $$A^r\sharp_\alpha B^r\leq \frac{f(\|B\|)}{\|B\|}A\sharp_\alpha B=\|B\|^{r-1}A\sharp_\alpha B.$$
Note that  $\|A\sharp_\alpha B\|^{r-1}\leq (\|A\|\sharp_\alpha \|B\|)^{r-1}\leq \|B\|^{r-1}$.

\end{remark}
In this direction, we present the next version of our result.
 \begin{theorem}\label{thh}
 Let $A$ and $B$ be positive definite matrices. Let $g$ and $h$ be positive matrix monotone functions on $[0,\infty)$ satisfying
 \begin{align}\label{d}
 \begin{split}
 & \mathrm{(i)}\quad g\circ \frac{1}{h}\leq \frac{1}{h}\circ g, \\
 &\mathrm{(ii)}\quad g\circ \frac{x}{h}\leq \frac{x}{h}\circ g,\\
 &\mathrm{(iii)}\quad h(x g(x))\leq h(x) h(g(x)).
 \end{split}
 \end{align}
 Then
 $$A\sigma_h B\leq I \qquad \Longrightarrow\qquad f(A)^n\sigma_h f(B)^n\leq I$$
 holds, where $f(x)=xg(x)$ and $f^2=f\circ f$. If the inequalities in \eqref{d} are reversed, then
 $$A\sigma_h B\geq I \qquad \Longrightarrow\qquad f(A)^n\sigma_h f(B)^n\geq I.$$
 \end{theorem}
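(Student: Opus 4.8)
The plan is to establish the $n=1$ statement, namely $A\sigma_h B\le I\Rightarrow f(A)\sigma_h f(B)\le I$, and then to iterate it. Writing $f^n$ for the $n$-fold composition $f\circ\cdots\circ f$ (so that $f(A)^n$ means $f^n(A)$), and noting that $g>0$ on $(0,\infty)$ forces $f(x)=xg(x)>0$ on $(0,\infty)$, so that each iterate $f^k(A)$ is again positive definite, one gets the general statement by applying the $n=1$ case successively to $(A,B),(f(A),f(B)),\dots,(f^{n-1}(A),f^{n-1}(B))$. The structural fact I would use throughout is that $f(x)=xg(x)$ is operator convex on $[0,\infty)$ with $f(0)=0$: since $g$ is operator monotone on $[0,\infty)$, the function $f(x)/x=g(x)$ is operator monotone, and a continuous function on $[0,\infty)$ that vanishes at $0$ is operator convex exactly when $f(x)/x$ is operator monotone.

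For the case $n=1$, put $C=A^{-1/2}BA^{-1/2}$, so that $A\sigma_h B=A^{1/2}h(C)A^{1/2}$, and set $K=h(C)^{1/2}A^{1/2}$. Since $C$ commutes with $h(C)$, a direct computation gives
$$K^*K=A\sigma_h B\le I,\qquad K^*h(C)^{-1}K=A,\qquad K^*\big(Ch(C)^{-1}\big)K=B,$$
so $K$ is a contraction. Applying the Hansen--Pedersen (operator Jensen) inequality to the operator convex function $f$ with $f(0)=0$ and the contraction $K$ yields
$$f(A)\le K^*f\big(h(C)^{-1}\big)K,\qquad f(B)\le K^*f\big(Ch(C)^{-1}\big)K.$$
All matrices inside these functional calculi are functions of the single positive definite matrix $C$, so the scalar inequalities (i)--(iii) lift through the functional calculus of $C$: from (i) and (iii), $f\big(h(C)^{-1}\big)=h(C)^{-1}g\big(h(C)^{-1}\big)\le h(C)^{-1}h(g(C))^{-1}\le h(f(C))^{-1}$; from (ii) and (iii), $f\big(Ch(C)^{-1}\big)=Ch(C)^{-1}g\big(Ch(C)^{-1}\big)\le f(C)\big(h(C)h(g(C))\big)^{-1}\le f(C)h(f(C))^{-1}$. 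Hence $f(A)\le\widehat A:=K^*h(f(C))^{-1}K$ and $f(B)\le\widehat B:=K^*\big(f(C)h(f(C))^{-1}\big)K$.

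To conclude I would recognize $\widehat A\,\sigma_h\,\widehat B$ as $A\sigma_h B$. Setting $S=h(f(C))^{-1/2}K$, which is invertible, one has $\widehat A=S^*S$ and $\widehat B=S^*f(C)S$, so by the congruence invariance of matrix means (property (ii), an equality for invertible $S$),
$$\widehat A\,\sigma_h\,\widehat B=S^*\big(I\,\sigma_h\,f(C)\big)S=S^*h(f(C))S=K^*K=A\sigma_h B\le I.$$
Together with $f(A)\le\widehat A$, $f(B)\le\widehat B$ and the monotonicity of $\sigma_h$ (property (i)), this gives $f(A)\sigma_h f(B)\le\widehat A\,\sigma_h\,\widehat B\le I$, proving the $n=1$ case. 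For the reversed hypotheses one argues symmetrically: when $A\sigma_h B\ge I$ the matrix $K$ is invertible with $\|K^{-1}\|\le1$, so Hansen--Pedersen applied to the contraction $K^{-1}$ reverses the two estimates for $f(A)$ and $f(B)$, the reversed forms of (i)--(iii) reverse the functional-calculus bounds, and the same congruence identity $\widehat A\,\sigma_h\,\widehat B=A\sigma_h B$ gives $f(A)\sigma_h f(B)\ge\widehat A\,\sigma_h\,\widehat B=A\sigma_h B\ge I$.

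The point the argument is designed to get around is that $f(x)=xg(x)$ is in general not operator monotone (for instance $f(x)=x^{3/2}$ when $g(x)=x^{1/2}$), so one cannot pass directly from $A\le h(C)^{-1}$ to $f(A)\le f(h(C)^{-1})$; the Hansen--Pedersen inequality for the contraction $K=h(C)^{1/2}A^{1/2}$ is exactly what supplies this missing monotonicity, at the cost of having to rebuild $\widehat A,\widehat B$ and re-identify their mean, which is where conditions (i)--(iii) and congruence invariance come in. The remaining matters---validity of the functional-calculus steps, strict positivity of $h$ and $g$ on $(0,\infty)$ (which follows from their integral representations), and positive definiteness of the iterates---are routine.
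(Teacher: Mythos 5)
Your argument is correct, and it reaches the same intermediate conclusion as the paper (the stronger monotonicity statement $f(A)\sigma_h f(B)\leq A\sigma_h B$, which then iterates), but by a genuinely different mechanism. The paper never invokes operator convexity of $f$ or the Hansen--Pedersen inequality: it introduces the auxiliary operator monotone function $k(x)=xg(x^{-1})$ and its mean $\sigma_k$, writes $Y=A^{-1/2}Bg(B)A^{-1/2}$ as $X(A\sigma_k X^{-1})X$ with $X=A^{-1/2}BA^{-1/2}$, uses the hypothesis in the form $A\leq h(X)^{-1}$ together with monotonicity of $\sigma_k$, and then pushes (i)--(iii) through to bound $g(A)\sigma_h Y\leq h(X)$, finishing with the congruence $f(A)\sigma_h f(B)=A^{1/2}\bigl(g(A)\sigma_h Y\bigr)A^{1/2}$. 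You instead encode the hypothesis in the single contraction $K=h(C)^{1/2}A^{1/2}$ satisfying $K^*h(C)^{-1}K=A$ and $K^*\bigl(Ch(C)^{-1}\bigr)K=B$, replace the missing operator monotonicity of $f$ by the Jensen operator inequality for the operator convex function $xg(x)$, and close the loop with the clean congruence identity $\widehat A\,\sigma_h\,\widehat B=S^*\bigl(I\sigma_h f(C)\bigr)S=A\sigma_h B$. Both proofs use (i) on the ``$A$-side,'' (ii) on the ``$B$-side,'' and (iii) to recombine, all as scalar inequalities lifted through the functional calculus of the single matrix $C=X$; but your route treats $A$ and $B$ symmetrically and makes the reversed case transparent (apply Hansen--Pedersen to the contraction $K^{-1}$), whereas the paper's route is more computational and trades the convexity input for the monotonicity of the auxiliary mean $\sigma_k$. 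Your reading of $f(A)^n$ as the $n$-fold composite $f^n(A)$ matches the paper's intent and its concluding iteration.
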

 Before proving the theorem, we give some examples to demonstrate that the conditions on $g$ and $h$ mentioned above are not overly restrictive. If $p,q\in[0,1]$, then $g_1(x)=x^p$ and $h_1(x)=x^q$ satisfy \eqref{d}. The same holds for the pair of matrix monotone functions $(g_2,h_2)$, where $g_2(x)=\log(x)$ and $h_2(x)=x/\log(x)$. Moreover, certain other matrix monotone functions meet these conditions when their domains are restricted. Hence, the theorem is valid when the spectra of $A$ and $B$ are limited.

 Furthermore, it is known that a continuous function
 $f$ on $[0,\infty)$ is matrix convex and $f(0)\leq0$ if and only if $x\mapsto f(x)/x$ is matrix monotone.
 Hence, Theorem \ref{thh} ensures the next result.
 \begin{corollary}\label{thh2}
 Let $A$ and $B$ be positive definite matrices. Let $f$ be a matrix convex function, $h$ be a matrix monotone function, and $f(0)\leq0$. If
 \begin{align}\label{dd}
 \begin{split}
 & \mathrm{(i)}\quad \frac{f(x)}{x}\circ \frac{1}{h}\leq \frac{1}{h}\circ \frac{f(x)}{x}, \\
 &\mathrm{(ii)}\quad \frac{f(x)}{x}\circ \frac{x}{h}\leq \frac{x}{h}\circ \frac{f(x)}{x},\\
 &\mathrm{(iii)}\quad h(f(x))\leq h(x) h\left(\frac{f(x)}{x}\right),
 \end{split}
 \end{align}
then
 $$A\sigma_h B\leq I \qquad \Longrightarrow\qquad f(A)^n\sigma_h f(B)^n\leq I.$$
 \end{corollary}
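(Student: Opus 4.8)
The plan is to obtain this corollary as an immediate specialization of Theorem \ref{thh}, the sole genuine input being the characterization recalled just before its statement: a continuous function $f$ on $[0,\infty)$ with $f(0)\leq0$ is matrix convex if and only if $x\mapsto f(x)/x$ is matrix monotone on $[0,\infty)$. Accordingly, the first step is to set $g(x):=f(x)/x$ for $x>0$ and $g(0):=f_{+}^{\prime}(0)$. By the quoted characterization $g$ is matrix monotone, and since $f$ is nonnegative (whence $f(0)=0$ and $f(A),f(B)\in\mathbb{P}_n$, so that $\sigma_h$ applies to them), $g$ is in fact a positive matrix monotone function on $[0,\infty)$, exactly as Theorem \ref{thh} demands; moreover $f(x)=xg(x)$ by construction, which is precisely the relation between $f$ and $g$ used there.

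The second step is mere bookkeeping: system \eqref{dd} is precisely system \eqref{d} with $g$ substituted for $f(x)/x$. Conditions (i) and (ii) of \eqref{dd} are literally conditions (i) and (ii) of \eqref{d} for this $g$; and since $h(f(x))=h(xg(x))$ while $h\left(\frac{f(x)}{x}\right)=h(g(x))$, condition (iii) of \eqref{dd} reads $h(xg(x))\leq h(x)h(g(x))$, which is condition (iii) of \eqref{d}. Hence the pair $(g,h)$ satisfies all hypotheses of Theorem \ref{thh}, and that theorem immediately yields $A\sigma_h B\leq I\Longrightarrow f(A)^n\sigma_h f(B)^n\leq I$, as claimed. (The reversed hypotheses would likewise give the reversed conclusion, should one wish to record it.)

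There is no real obstacle here; the corollary is a translation of Theorem \ref{thh} into the language of matrix convex functions, with no further computation. The one point deserving a moment's care is the first step, namely verifying that $g=f/x$ is a bona fide positive matrix monotone function on the whole of $[0,\infty)$ — including the limiting value at $0$ and the fact that $f\geq0$ together with $f(0)\leq0$ forces $g\geq0$ — so that the hypotheses of Theorem \ref{thh} are met without qualification.
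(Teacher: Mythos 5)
Your proposal is correct and coincides with the paper's own (essentially one-line) derivation: the corollary is obtained by setting $g(x)=f(x)/x$, invoking the characterization that $f$ matrix convex with $f(0)\leq0$ is equivalent to $g$ matrix monotone, and observing that \eqref{dd} is \eqref{d} under this substitution. Your extra remark that one must (implicitly) assume $f\geq0$ so that $g$ is a \emph{positive} matrix monotone function and $f(A),f(B)\in\mathbb{P}_n$ is a point the paper glosses over, and is worth keeping.
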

 \begin{proof} ({\it of Theorem \ref{thh}}).
 Suppose that $g(x)$ and $h(x)$ are matrix monotone functions and $A,B\in\mathbb{P}_n^+$.
 Set $k(x):=xg(x^{-1})$. Since $g$ is matrix monotone, the function $x\mapsto x/g(x)$ is also matrix monotone as shown in \cite[Corollary 1.14]{FMPS}. Therefore, the function $x\mapsto g(x)/x$ is a matrix monotone decreasing function. This implies that $k(x)=xg(x^{-1})$ is also matrix monotone. Let $\sigma_k$ be the matrix mean corresponding to the function $k$. Put $X=A^{-\frac{1}{2}}BA^{-\frac{1}{2}}$ and $Y=A^{-\frac{1}{2}}Bg(B) A^{-\frac{1}{2}}$. Then $B=A^\frac{1}{2}XA^\frac{1}{2}$ and we have
 \begin{align*}
 Y=A^{-\frac{1}{2}}Bg(B) A^{-\frac{1}{2}}& = A^{-\frac{1}{2}}B(B^{-1}g(B))B A^{-\frac{1}{2}}\\
 &=XA^\frac{1}{2}(B^{-1}g(B))A^\frac{1}{2}X \qquad(\mbox{since $A^{-\frac{1}{2}}B=XA^\frac{1}{2}$}) \\
 &=XA^\frac{1}{2}k(B^{-1})A^\frac{1}{2}X\\
 &=XA^\frac{1}{2} k\left(A^{-\frac{1}{2}} X^{-1} A^{-\frac{1}{2}} \right)A^\frac{1}{2}X\\
 &= X\left(A\sigma_k X^{-1}\right)X.
 \end{align*}
 Since $A\sigma_h B=A^\frac{1}{2}h(X)A^\frac{1}{2}$, it follows that
 $$A\sigma_h B\leq I \qquad \Leftrightarrow\qquad h(X)\leq A^{-1}\qquad \Leftrightarrow\qquad A\leq h(X)^{-1}.$$
Therefore, from the monotonicity of matrix means, we infer that
 \begin{align}\label{mk2}
 \begin{split}
 Y&=X\left(A\sigma_k X^{-1}\right)X\\
 &\leq X\left(h(X)^{-1}\sigma_k X^{-1}\right)X\qquad(\mbox{by $A\leq h(X)^{-1}$})\\
 & = X^2h(X)^{-1} k \left(h(X)X^{-1}\right)\qquad(\mbox{since $X$ and $h(X)$ commute})\\
 &= X g\left(h(X)^{-1}X\right)\qquad\qquad(\mbox{by $k(x)=xg(x^{-1})$})\\
 &\leq X g(X) h(g(X))^{-1},
 \end{split}
 \end{align}
where the last inequality follows from hypothesis {\rm (ii)} in \eqref{d}. Furthermore, the matrix monotonicity of $g$ together with hypothesis {\rm (i)} in \eqref{d} gives
\begin{align}\label{df}
g(A)\leq g\left(h(X)^{-1}\right)\leq h(g(X))^{-1}.
\end{align}
Therefore,
\begin{align}\label{edf}
\begin{split}
g(A)\sigma_h Y &\leq h(g(X))^{-1} \sigma_h X g(X) h(g(X))^{-1}\qquad(\mbox{by \eqref{mk2} and \eqref{df}})\\
&= h(g(X))^{-1} h\left(X g(X)\right) \\
&\leq h(g(X))^{-1} h(X) h(g(X))=h(X),
 \end{split}
\end{align}
in which the last inequality comes from part {\rm (iii)} in \eqref{d}.
Now put $f(x)=xg(x)$. Since $A$ and $g(A)$ commute, we can write
 \begin{align}\label{mk1}
 \begin{split}
 f(A)\sigma_h f(B)&=(Ag(A))^\frac{1}{2}h\left(g(A)^{-\frac{1}{2}}A^{-\frac{1}{2}}Bg(B)
 A^{-\frac{1}{2}}g(A)^{-\frac{1}{2}}\right) (Ag(A))^\frac{1}{2}\\
 &=A^\frac{1}{2}\left(g(A)\sigma_h \left(A^{-\frac{1}{2}}Bg(B)
 A^{-\frac{1}{2}}\right)\right)A^\frac{1}{2}\\
 &=A^\frac{1}{2}\left(g(A)\sigma_h Y\right)A^\frac{1}{2}\\
 &\leq A^\frac{1}{2}h(X) A^\frac{1}{2}\qquad\qquad(\mbox{by \eqref{edf}})\\
 &=A\sigma_h B\leq I,
 \end{split}
 \end{align}
 as required. Accordingly, $f^n(A) \sigma_h f^n(B)\leq A\sigma_h B\leq I$.
 \end{proof}



 \bigskip

One may observe that a function can be neither convex nor concave, but its inverse function (if it
exists) may be convex or concave. Examples of such functions include $\tan x$ and
$\cot x$. The function $f(x)=x+\frac{1}{x}$ on $(0, \infty)$ does not satisfy inequality \eqref{(2.1)}, but its inverse function $f^{-1}(x)=\frac{1}{2}(x+\sqrt{x^{2}-4})$ is concave on the interval $[2,\infty)$. Therefore, it makes sense to consider the following result.
\begin{proposition}
	Let $f$ be a nonnegative increasing function on $[0,\infty)$ with $f(0)=0$. If the inverse function $f^{-1}$ is differentiable and convex, then
	$$\frac{f(M)}{M}(A\sigma B)\leq f(A)\sigma f(B)\leq \frac{f(m)}{m}(A\sigma B)$$
for all $A,B\in \mathbb{P}_{n}^+$, whose spectra are contained in $[m,M]$. If $f^{-1}$ is differentiable and concave, then
$$\frac{f(m)}{m}(A\sigma B)\leq f(A)\sigma f(B)\leq \frac{f(M)}{M}(A\sigma B).$$
\end{proposition}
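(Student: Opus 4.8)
The plan is to deduce the statement from Theorem~\ref{2.1} by applying that theorem to $f^{-1}$, with $f(A)$ and $f(B)$ playing the roles of $A$ and $B$. First I would collect the elementary properties of $f^{-1}$ that make this legitimate: since the inverse exists, $f$ is strictly increasing, so together with $f(0)=0$ and $f\ge 0$ it maps $[0,\infty)$ bijectively onto an interval $[0,L)$; hence $f^{-1}$ is itself nonnegative with $f^{-1}(0)=0$, and $0<m<M$ forces $0<f(m)<f(M)$. (Here $m>0$ automatically, since $A,B$ are positive definite, and this is exactly what makes all the subsequent divisions legitimate.) By hypothesis $f^{-1}$ is differentiable and convex on $[0,L)$; to apply Theorem~\ref{2.1} verbatim I would extend $f^{-1}$ beyond $f(M)$ by its tangent line at $f(M)$, which preserves being $C^1$, convex, nonnegative, and vanishing at $0$, and alters no value on $[0,f(M)]$.

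The main step is then routine. Put $C:=f(A)$ and $D:=f(B)$; since $f$ is increasing, the functional calculus shows that the spectra of $C$ and $D$ lie in $[f(m),f(M)]$. Applying Theorem~\ref{2.1} to the nonnegative differentiable convex function $f^{-1}$ and the matrices $C,D\in\mathbb{P}_n^+$, and using $f^{-1}(C)=A$, $f^{-1}(D)=B$, $f^{-1}(f(m))=m$, $f^{-1}(f(M))=M$, and $C\sigma D=f(A)\sigma f(B)$, yields
\[
\frac{m}{f(m)}\bigl(f(A)\sigma f(B)\bigr)\ \le\ A\sigma B\ \le\ \frac{M}{f(M)}\bigl(f(A)\sigma f(B)\bigr).
\]
Multiplying the left inequality by $f(m)/m>0$ and the right one by $f(M)/M>0$ then gives
\[
\frac{f(M)}{M}(A\sigma B)\ \le\ f(A)\sigma f(B)\ \le\ \frac{f(m)}{m}(A\sigma B),
\]
as claimed. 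For the concave case I would run exactly the same argument, using instead the reversed chain from Theorem~\ref{2.1} for the concave function $f^{-1}$, namely $\frac{m}{f(m)}(f(A)\sigma f(B))\ge A\sigma B\ge \frac{M}{f(M)}(f(A)\sigma f(B))$, followed by the same rescaling to obtain $\frac{f(m)}{m}(A\sigma B)\le f(A)\sigma f(B)\le \frac{f(M)}{M}(A\sigma B)$.

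I do not expect any genuine obstacle. The one point deserving a word of care is that Theorem~\ref{2.1} is stated for functions defined on all of $[0,\infty)$, whereas $f^{-1}$ a priori lives only on the range of $f$; this is handled by the tangent-line extension noted above, or, alternatively, by observing that the two inequalities of Theorem~\ref{2.1} actually invoked (those involving $f(m)/m$ and $f(M)/M$) depend only on the values of the function on $[0,f(M)]$. Beyond that, the proof consists solely of the substitution of $f^{-1}$ for $f$ and of $f(A),f(B)$ for $A,B$, together with a rescaling by positive scalars.
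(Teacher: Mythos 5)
Your proposal is correct and follows essentially the same route as the paper: apply Theorem~\ref{2.1} to the convex (resp.\ concave) function $f^{-1}$ with $f(A),f(B)$ in place of $A,B$, then rescale by the positive constants $f(m)/m$ and $f(M)/M$. Your extra remark about extending $f^{-1}$ beyond the range of $f$ (or noting that only its values on $[0,f(M)]$ matter) is a small point of care that the paper's proof passes over silently, but it does not change the argument.
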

\begin{proof}
	Since $f^{-1}$ satisfies the conditions of Theorem \ref{2.1}, we have
	$$\frac{(f^{-1})(m)}{m}(A\sigma B)\leq f^{-1}(A)\sigma f^{-1}(B)\leq\frac{(f^{-1})(M)}{M}(A\sigma B).$$
	Since $m\leq A,B\leq M$ and $f$ is increasing, $f(A)$ and $f(B)$ are positive matrices, whose spectra are in $[f(m),f(M)]$. Hence, we can replace $A$ and $B$ with $f(A)$ and $f(B)$, respectively to obtain
	$$\frac{(f^{-1})(f(m))}{f(m)}(f(A)\sigma f(B))\leq A\sigma B\leq\frac{(f^{-1})(f(M))}{f(M)}(f(A)\sigma f(B)),$$
which is equivalent to
	$$\frac{f(M)}{M}(A\sigma B)\leq f(A)\sigma f(B)\leq \frac{f(m)}{m}(A\sigma B).$$
The case of concave functions is similarly proved.
\end{proof}

\section{Applications to Determinantal inequalities}
The famous Minkowski's determinant inequality \cite[Corollary II.3.21]{7.} states that if $A, B\in\mathbb{P}_{n}$, then
\begin{equation}\label{1}
	(\det A)^{\frac{1}{n}}+(\det B)^{\frac{1}{n}}\leq (\det(A+B))^{\frac{1}{n}}.
\end{equation}
In this section, utilizing our result in Section 2, we generalize the above inequality. We need the following lemma.
\begin{lemma}\cite[Theorem 6]{MIC}\label{lemma22}
	Let $A, B\in \mathbb{P}_{n}^{+}$ and let $\alpha, \beta>0$ with $\alpha+\beta=1$. Then
	\begin{equation}\label{4}
		\det(\alpha A+\beta B)\leq \alpha\det A+\beta\det B
	\end{equation}
if any one of the following conditions is satisfied:
	\begin{itemize}
		\item[(i)] $\lambda_{j}(B)<\lambda_{n}(A)$ for $j=1,2,\ldots,n,$
		\item[(ii)] $\lambda_{1}(A)<\lambda_{j}(B)$ for $j=1,2,\ldots,n.$
	\end{itemize}
\end{lemma}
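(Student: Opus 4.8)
The plan is to reduce the asserted inequality, via a congruence, to a statement about the determinant of a convex combination of a positive definite matrix with the identity, and then to the scalar inequality $\prod_i(\alpha a_i+\beta)\le\alpha\prod_i a_i+\beta$ for the eigenvalues $a_i$ of that matrix; the role of the hypotheses (i)--(ii) is exactly to force all the $a_i-1$ to have the same sign.

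First I would observe that conditions (i) and (ii) are interchanged by the substitution $A\leftrightarrow B$, $\alpha\leftrightarrow\beta$, under which the conclusion $\det(\alpha A+\beta B)\le\alpha\det A+\beta\det B$ is invariant; so it suffices to treat case (i). Condition (i) gives $\lambda_1(B)<\lambda_n(A)$, hence $B<\lambda_n(A)I\le A$, so $A-B>0$. Writing $\alpha A+\beta B=B^{1/2}\bigl(\alpha B^{-1/2}AB^{-1/2}+\beta I\bigr)B^{1/2}$ and using multiplicativity of the determinant, the claim becomes $\det(\alpha A'+\beta I)\le\alpha\det A'+\beta$ with $A'=B^{-1/2}AB^{-1/2}$; moreover $A'>I$ because $A>B$, so the eigenvalues $a_1,\dots,a_n$ of $A'$ all satisfy $a_i>1$.

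Next, diagonalizing $A'$, set $q(\alpha)=\det(\alpha A'+\beta I)=\prod_{i=1}^n\bigl(1+\alpha(a_i-1)\bigr)$ for $\alpha\in[0,1]$. Since $q(0)=1$ and $q(1)=\det A'$, the inequality to be proved is precisely $q(\alpha)\le\alpha q(1)+(1-\alpha)q(0)$, i.e. that $q$ lies below the chord joining its endpoints; this follows once $q$ is convex on $[0,1]$. Each factor $\ell_i(\alpha)=1+\alpha(a_i-1)$ is affine, and, because $a_i>1$, it is positive and increasing on $[0,1]$; a short induction (using $(uv)''=u''v+2u'v'+uv''$, all three terms being nonnegative for positive, increasing, convex $u,v$) shows that a product of positive, increasing, convex functions is again positive, increasing and convex. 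Equivalently, one can compute directly $q''(\alpha)=\sum_{i\ne j}(a_i-1)(a_j-1)\prod_{k\ne i,j}\bigl(1+\alpha(a_k-1)\bigr)\ge0$ on $[0,1]$. Hence $q$ is convex, so $q(\alpha)\le\alpha\det A'+(1-\alpha)$, and multiplying through by $\det B>0$ yields $\det(\alpha A+\beta B)\le\alpha\det A+\beta\det B$.

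The only real obstacle is conceptual rather than computational: one must recognize that the eigenvalue hypothesis is exactly what makes every $a_i-1$ positive (or, in case (ii) handled the same way directly, every $a_i-1$ negative), so that the cross terms $(a_i-1)(a_j-1)$ in $q''$ stay nonnegative. Without this the claim genuinely fails, as one sees by taking $A'=\mathrm{diag}(4,\tfrac14)$ and $\alpha=\beta=\tfrac12$, where the eigenvalues of $B^{-1/2}AB^{-1/2}$ straddle $1$. Everything else is routine bookkeeping with the functional calculus and the multiplicativity of the determinant.
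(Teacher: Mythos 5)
Your argument is correct. Note, however, that the paper does not prove this lemma at all: it is imported verbatim as \cite[Theorem 6]{MIC} (Dannan), so there is no in-paper proof to compare against. Your write-up therefore supplies a self-contained, elementary proof where the paper relies on a citation. The chain of reductions is sound: the symmetry $A\leftrightarrow B$, $\alpha\leftrightarrow\beta$ correctly exchanges hypotheses (i) and (ii) while fixing the conclusion; the congruence $\alpha A+\beta B=B^{1/2}(\alpha A'+\beta I)B^{1/2}$ with $A'=B^{-1/2}AB^{-1/2}$ reduces everything to the scalar inequality $\prod_i(1+\alpha(a_i-1))\le\alpha\prod_i a_i+\beta$; and the convexity of $q(\alpha)=\prod_i(1+\alpha(a_i-1))$ via $q''(\alpha)=\sum_{i\ne j}(a_i-1)(a_j-1)\prod_{k\ne i,j}(1+\alpha(a_k-1))\ge 0$ is exactly where the sign condition on the $a_i-1$ enters. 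Two remarks. First, your proof actually uses only the consequence $A>B$ (resp.\ $B>A$) of hypothesis (i) (resp.\ (ii)), so you have established a slightly more general statement than the lemma as cited: the spectral separation conditions $\lambda_1(B)<\lambda_n(A)$ or $\lambda_1(A)<\lambda_n(B)$ are strictly stronger than Loewner comparability, and only the latter is needed. That is a feature, not a bug, but it is worth saying explicitly if you present this as a proof of the cited statement. Second, the closing counterexample $A'=\mathrm{diag}(4,\tfrac14)$ (giving $q(\tfrac12)=1.5625>1$) is a nice confirmation that some one-sidedness hypothesis is genuinely necessary.
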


\begin{theorem}\label{5}
	Let $f$ be a nonnegative differentiable convex function on $[0,\infty)$ with $f(0)=0$. If $A, B\in \mathbb{P}_{n}^+$ and $0<m\leq A,B\leq M$, then
	\begin{itemize}
		\item[(i)] $(\det f(A))^{\frac{1}{n}}+(\det f(B))^{\frac{1}{n}}\leq \frac{f(M)}{M}(\det(A+B))^{\frac{1}{n}},$
		\item[(ii)] $\frac{f(m)}{m}((\det A)^{\frac{1}{n}}+(\det B)^{\frac{1}{n}})\leq (\det(f(A)+f(B)))^{\frac{1}{n}}.$
	\end{itemize}
	If the function $f$ is differentiable concave, then
	\begin{itemize}
		\item[(iii)] $(\det f(A))^{\frac{1}{n}}+(\det f(B))^{\frac{1}{n}}\leq \frac{f(m)}{m}(\det(A+B))^{\frac{1}{n}},$
		\item[(iv)] $\frac{f(M)}{M}((\det A)^{\frac{1}{n}}+(\det B)^{\frac{1}{n}})\leq (\det(f(A)+f(B)))^{\frac{1}{n}}.$
	\end{itemize}
\end{theorem}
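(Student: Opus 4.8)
The plan is to combine the matrix inequalities coming from Theorem \ref{2.1} (with the arithmetic mean $\sigma=\nabla_{1/2}$, or more directly the scalar pointwise bounds used to prove it) with Minkowski's inequality \eqref{1} and with the refined inequality \eqref{4} of Lemma \ref{lemma22}. The point is that the hypothesis $0<m\le A,B\le M$ forces the spectra of $A$, $B$, $f(A)$, $f(B)$ to be suitably comparable, so that one of the two conditions in Lemma \ref{lemma22} can be invoked whenever we need superadditivity of $\det(\cdot)^{1/n}$ to go the ``wrong'' way.

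For part (i), I would start from the scalar estimate $f(x)\le \frac{f(M)}{M}x$ for $x\in[m,M]$ (this is inequality \eqref{k11}), which via functional calculus gives $f(A)\le \frac{f(M)}{M}A$ and $f(B)\le \frac{f(M)}{M}B$. Since the determinant is monotone on $\mathbb{P}_n$ and homogeneous of degree $n$,
\begin{align*}
(\det f(A))^{1/n}+(\det f(B))^{1/n}\le \frac{f(M)}{M}\big((\det A)^{1/n}+(\det B)^{1/n}\big)\le \frac{f(M)}{M}(\det(A+B))^{1/n},
\end{align*}
the last step being exactly Minkowski's inequality \eqref{1}. Part (ii) is dual: from \eqref{k22} we have $f(x)\ge \frac{f(m)}{m}x$ on $[m,M]$, hence $f(A)\ge \frac{f(m)}{m}A$ and $f(B)\ge \frac{f(m)}{m}B$, so by monotonicity and homogeneity of $\det$,
\begin{align*}
\frac{f(m)}{m}\big((\det A)^{1/n}+(\det B)^{1/n}\big)=(\det \tfrac{f(m)}{m}A)^{1/n}+(\det \tfrac{f(m)}{m}B)^{1/n}\le (\det f(A))^{1/n}+(\det f(B))^{1/n},
\end{align*}
and now the superadditivity $(\det X)^{1/n}+(\det Y)^{1/n}\le(\det(X+Y))^{1/n}$ applied to $X=f(A)$, $Y=f(B)$ finishes it; note that \eqref{1} already gives this last step with no extra hypothesis. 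The concave cases (iii) and (iv) are proved in exactly the same way, using the reversed scalar inequalities $f(x)\ge\frac{f(M)}{M}x$ and $f(x)\le\frac{f(m)}{m}x$ valid for concave $f$ with $f(0)=0$ on $[m,M]$.

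The step I expect to need the most care is justifying that all the determinant manipulations are legitimate and, in particular, checking whether Lemma \ref{lemma22} is really needed or whether the plain Minkowski inequality \eqref{1} suffices. In the write-up above I used only \eqref{1}, which is clean; if instead one wants to route through $A\nabla_{1/2}B$ and the full chain of Theorem \ref{2.1}, then one must verify the eigenvalue-comparability hypotheses (i) or (ii) of Lemma \ref{lemma22} — for instance, when $f$ is convex and increasing the spectrum of $f(A)$ lies in $[f(m),f(M)]$, and one compares $\lambda_j(f(B))$ with $\lambda_n(f(A))$ using $f(m)\le f(M)$ — so that \eqref{4} applies to $\alpha=\beta=\tfrac12$ and upgrades the weighted arithmetic mean estimate. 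Either route works; I would present the short one via \eqref{1} as the main argument and remark that Lemma \ref{lemma22} yields a sharper intermediate statement when the eigenvalue conditions hold.
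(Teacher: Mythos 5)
Your proof is correct, and it takes a slightly different, more elementary route than the paper's. The paper applies Corollary \ref{2.4}(ii) with $\sigma=\nabla_{1/2}$ and $k=n$ to obtain the two determinant bounds $(\det(f(A)+f(B)))^{1/n}\le \tfrac{f(M)}{M}(\det(A+B))^{1/n}$ and $\tfrac{f(m)}{m}(\det(A+B))^{1/n}\le(\det(f(A)+f(B)))^{1/n}$ (its \eqref{po1} and \eqref{qo1}), and then splices in Minkowski's inequality \eqref{1}, applied to the pair $f(A),f(B)$ for part (i) and to the pair $A,B$ for part (ii). You instead work term by term: from the scalar chord bounds \eqref{k11} and \eqref{k22} you get $\tfrac{f(m)}{m}A\le f(A)\le\tfrac{f(M)}{M}A$, pass to determinants using monotonicity and $n$-homogeneity of $\det$ on $\mathbb{P}_n$, and then apply Minkowski to the \emph{other} pair in each part ($A,B$ in (i); $f(A),f(B)$ in (ii)). The two arguments are dual and rest on exactly the same ingredients, but yours bypasses the Section 2 machinery entirely, making the proof self-contained, whereas the paper's presentation emphasizes that the theorem is a consequence of Corollary \ref{2.4}. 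You are also right that Lemma \ref{lemma22} is not needed here; the paper reserves it for the reverse inequality in Theorem \ref{6}, so your decision to present the argument using only \eqref{1} matches what the paper actually does.
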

\begin{proof}
 Applying part (ii) of Corollary \ref{2.4} with $\sigma=\nabla_{\frac{1}{2}}$ and $k=n$, we obtain
	{\small\begin{align*}
		f^{\prime}(0)(\det(A+B))^{\frac{1}{n}}&\leq\frac{f(m)}{m}(\det(A+B))^{\frac{1}{n}}\\
&\leq (\det(f(A)+f(B)))^{\frac{1}{n}}\\
& \leq\frac{f(M)}{M}(\det(A+B))^{\frac{1}{n}}
		 \leq f^{\prime}(M)(\det(A+B))^{\frac{1}{n}}.
	\end{align*}}
In particular,
	 \begin{align}\label{po1}
(\det(f(A)+f(B)))^{\frac{1}{n}}\leq \frac{f(M)}{M}(\det(A+B))^{\frac{1}{n}}
	\end{align}
and
	 \begin{align}\label{qo1}
\frac{f(m)}{m}(\det(A+B))^{\frac{1}{n}}\leq (\det(f(A)+f(B)))^{\frac{1}{n}}.
	\end{align}
	Minkowski's determinant inequality \eqref{1} implies that
 \begin{align}\label{po2}
 (\det f(A))^{\frac{1}{n}}+(\det f(B))^{\frac{1}{n}}\leq(\det(f(A)+f(B)))^{\frac{1}{n}}.
 	\end{align}
	Part (i) now follows from \eqref{po1} and \eqref{po2}. Using the above two inequalities, we get our desired result. Part (ii) follows analogously by employing \eqref{qo1} and the
 Minkowski determinant inequality.
	The proofs of (iii) and (iv) for concave functions are similar.
\end{proof}

Taking $f(x)=x$, from Theorem \ref{5} we get the Minkowski determinant inequality.

\bigskip

In the next result, we present a reverse type inequality for the Minkowski determinant inequality.

\begin{theorem}\label{6}
Let $A, B\in \mathbb{P}_{n}^+$ and $0<m\leq A,B\leq M$ and let $f$ be a nonnegative differentiable convex function on $[0,\infty)$ with $f(0)=0$. Let $\alpha, \beta>0$ with $\alpha+\beta=1$. If one of the following conditions is satisfied:
	\begin{itemize}
		\item[(i)] $\lambda_{j}(B)<\lambda_{n}(A)$ for $j=1,2,\ldots,n,$
		\item[(ii)] $\lambda_{1}(A)<\lambda_{j}(B)$ for $j=1,2,\ldots,n,$
	\end{itemize}
	then
	\begin{equation*}
		(\det(f(A)+f(B)))^{\frac{1}{n}}\leq 2^{1-\frac{1}{n}}\frac{f(M)}{M}((\det A)^{\frac{1}{n}}+(\det B)^{\frac{1}{n}}).
	\end{equation*}
\end{theorem}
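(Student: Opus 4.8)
The plan is to chain three ingredients: the matrix inequality $f(A)+f(B)\le\frac{f(M)}{M}(A+B)$ coming from Theorem \ref{2.1}, the reverse Minkowski-type determinant inequality of Lemma \ref{lemma22}, and the elementary subadditivity of $t\mapsto t^{1/n}$ on $[0,\infty)$.

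First I would recall the estimate already obtained inside the proof of Theorem \ref{5}. Since the spectra of $A$ and $B$ lie in $[m,M]$ and $f$ is convex with $f(0)=0$, we have $f(x)\le\frac{f(M)}{M}x$ on $[m,M]$ (this is \eqref{k11}); applying the functional calculus and adding gives $f(A)+f(B)\le\frac{f(M)}{M}(A+B)$, and the monotonicity of $\det$ on $\mathbb{P}_n$ then yields
$$(\det(f(A)+f(B)))^{1/n}\le\frac{f(M)}{M}(\det(A+B))^{1/n},$$
which is precisely inequality \eqref{po1}.

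Next I would apply Lemma \ref{lemma22} with the symmetric choice $\alpha=\beta=\tfrac12$: under either of the hypotheses (i), (ii)---which are exactly the two alternatives of that lemma---one gets $\det\big(\tfrac12 A+\tfrac12 B\big)\le\tfrac12\det A+\tfrac12\det B$, that is, $\det(A+B)\le 2^{n-1}(\det A+\det B)$, hence $(\det(A+B))^{1/n}\le 2^{1-1/n}(\det A+\det B)^{1/n}$. Finally, since $0<\tfrac1n\le1$, the map $t\mapsto t^{1/n}$ is subadditive on $[0,\infty)$, so $(\det A+\det B)^{1/n}\le(\det A)^{1/n}+(\det B)^{1/n}$. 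Concatenating the three displayed inequalities gives the claim.

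The argument is essentially bookkeeping, so I do not anticipate a genuine obstacle; the point that needs care is the orientation of each estimate. It is the \emph{reverse} Minkowski inequality of Lemma \ref{lemma22}---valid only thanks to the eigenvalue-separation hypotheses (i)/(ii)---that makes an \emph{upper} bound on $\det(A+B)$ in terms of $\det A$ and $\det B$ possible at all, since the genuine Minkowski inequality \eqref{1} runs the other way. One should also note that the parameters $\alpha,\beta$ in the statement serve only to record the hypotheses of Lemma \ref{lemma22}: a non-symmetric choice is of no use here, because requiring $\alpha A+\beta B$ to be a scalar multiple of $A+B$ forces $\alpha=\beta=\tfrac12$, which is exactly what produces the constant $2^{1-1/n}$.
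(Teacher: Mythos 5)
Your proposal is correct and follows essentially the same route as the paper's proof: chain inequality \eqref{po1}, Lemma \ref{lemma22} with $\alpha=\beta=\tfrac12$ (the paper applies the lemma to $2A,2B$ inside $\det\bigl(\tfrac{2A+2B}{2}\bigr)$ rather than rescaling afterwards, which is the identical computation), and the subadditivity of $t\mapsto t^{1/n}$. Your closing observation that the symmetric choice $\alpha=\beta=\tfrac12$ is forced and produces the constant $2^{1-1/n}$ is accurate but does not change the argument.
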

\begin{proof}
 Using inequality \eqref{po1} and Lemma \ref{lemma22}, we can write
	\begin{align*}
		(\det(f(A)+f(B)))^{\frac{1}{n}}&\leq \frac{f(M)}{M}\left(\det\left(\frac{2A+2B}{2}\right)\right)^{\frac{1}{n}}\qquad(\mbox{by \eqref{po1}}) \\
		&\leq \frac{f(M)}{M}\left(\frac{\det(2A)+\det(2B)}{2}\right)^{\frac{1}{n}}\qquad(\mbox{by Lemma \ref{lemma22}}) \\
		&=\frac{f(M)}{M}\left(\frac{2^{n}}{2}(\det A+\det B)\right)^{\frac{1}{n}} \\
		&=\frac{f(M)}{M}2^{\frac{n-1}{n}}(\det A+\det B)^{\frac{1}{n}} \\
		&\leq2^{1-\frac{1}{n}}\frac{f(M)}{M}\left((\det A)^{\frac{1}{n}}+(\det B)^{\frac{1}{n}}\right) ,
	\end{align*}
	where the last inequality follows from the subadditivity of the function $x^{\frac{1}{n}}.$ This concludes the desired inequality.
\end{proof}

	\begin{remark}
		Taking $f(x)=x$ in Theorem \ref{6}, we have
		$$(\det(A+B))^{\frac{1}{n}}\leq 2^{1-\frac{1}{n}}\left((\det A)^{\frac{1}{n}}+(\det B)^{\frac{1}{n}}\right),$$
which is a reverse type inequality for Minkowski's determinant inequality.
	\end{remark}

\medskip
\textbf{Acknowledgments:}
The authors are thankful to Professor Mandeep Singh for making valuable suggestions.

\medskip

\noindent \textit{Conflict of Interest Statement.} On behalf of all authors, the corresponding author states that there is no conflict of interest.\\

\noindent\textit{Data Availability Statement.} Data sharing not applicable to this article as no datasets were generated or analysed during the current study.

\medskip

\bibliographystyle{amsplain}

\end{document}